\pdfoutput=1
\documentclass[11pt]{article}
\usepackage{vmargin}
\usepackage{times}

\usepackage[utf8]{inputenc}
\usepackage[T1]{fontenc}

\usepackage[whole]{bxcjkjatype}
\usepackage{CJK}

\usepackage{comment}

\usepackage{epsfig}
\usepackage{graphicx}
\usepackage{amssymb}
\usepackage{amsmath}
\usepackage[utf8]{inputenc}
\usepackage{amsfonts}
\usepackage{amsthm}
\usepackage{paralist}
\usepackage{stmaryrd}
\usepackage[toc,page]{appendix}
\usepackage{bbm}
\usepackage{bm}

\usepackage{mathrsfs}

\usepackage{longtable}
\usepackage[labelfont=bf,font=it]{caption}

\usepackage[colorlinks=true,linkcolor=blue,citecolor=blue,urlcolor=blue,pdfborder={0 0 0}]{hyperref}
\usepackage{bookmark}

\usepackage{color}

\usepackage[dvipsnames]{xcolor}

\usepackage{centernot}

\usepackage{cite}
\usepackage{cleveref}
 \usepackage{crossreftools}

  \usepackage{microtype}

\usepackage{mathtools} 

\theoremstyle{plain}
\newtheorem{corollary}{Corollary}
\newtheorem{theorem}[corollary]{Theorem}
\newtheorem{lemma}[corollary]{Lemma}

\newtheorem*{conjecture*}{Conjecture}
\newtheorem{proposition}[corollary]{Proposition}
\newtheorem*{theorem*}{Theorem}
\newtheorem*{lemma*}{Lemma}
\newtheorem*{definition*}{Definition}
\newtheorem*{corollary*}{Corollary}
\theoremstyle{definition}

\theoremstyle{remark}

\newcommand{\R}{\mathbb{R}}

\newcommand{\Z}{\mathbb{Z}}
\newcommand{\proba}{\mathbb{P}}

\newcommand{\E}{\mathbb{E}}

\renewcommand{\S}{\mathcal{S}}

\newcommand{\Comp}{\mathfrak{C}}

\newcommand{\1}{\mathbf{1}}

\newcommand{\off}{\hbox{{\rm off}}}

\newcommand{\be}{\begin{eqnarray}}
\newcommand{\ee}{\end{eqnarray}}

\DeclareMathOperator{\Cov}{Cov}

\newcommand{\Bc}{\mathcal{B}}
\newcommand{\Gr}{\mathbb{G}}
\newcommand{\Fc}{\mathcal{F}}
\newcommand{\Gc}{\mathcal{G}}
\newcommand{\Hc}{\mathcal{H}}
\newcommand{\Sc}{\mathcal{S}}
\newcommand{\Tf}{\mathfrak{T}}

\newcommand{\GrSeq}{\Gr_{\text{seq}}}

\DeclareMathOperator*{\slr}{\leftrightarrow}
\DeclareMathOperator*{\snlr}{\nleftrightarrow}

\begin{document}
\title{Notes on Kozma and Nitzan's proof of $\theta(p_c)=0$.}
\author{Arthur Blanc-Renaudie}
\maketitle
\begin{abstract}
We present a human intelligible proof of the conjectures of Kozma and Nitzan, that they used to show $\theta(p_c)=0$. The proof was recently found by AI, but was very poorly written \cite{Leder26,KN-AI26}. 
Our version is much shorter as it completely skips the horizontal term and its long estimate.
\end{abstract}
\section{Introduction}
\subsection{Motivations}
The most famous problem in probability theory has been for a very long time $\theta(p_c)=0$: take $\Z^d$, $d\geq 2$, keep each edge with probability $p$, and erase the others to get a random graph.
There exists a critical probability $p_c$ such that a.s. for $p<p_c$, all the clusters (connected components) are finite, and for $p>p_c$ there exists an infinite cluster. The big conjecture is that all critical clusters are finite.
This was solved for $d=2$ \cite{Harris60,Kesten80}, using planarity, and $d\geq 11$ \cite{FH17} using high-dimensional methods.

Recently, Kozma and Nitzan \cite{KN24} gave a conditional proof of the conjecture in dimensions $d\geq 3$.
Their proof is based on a one-step renormalisation scheme from \cite{BGN91,GM90}. 
To summarize the proof, through a well-chosen exploration and construction, they show that if percolation occurs in $\Z^d$ at $p_c$, then it also occurs in a slab $\Z^2\times[-n,n]^{d-2}$, contradicting \cite{AG91}.
However, their proof relied on a conjectured correlation inequality that was doubted and thought very hard to show.

It was a huge shock when last month AI proved this inequality. 
However, its announcement was a complete disaster.
It all started with a leak of 100,000 lines of Lean \cite{LederLean26}, with a manuscript \cite{Leder26} that clearly does not respect the standards of a mathematical paper.
Bou Rabee then prompted an AI to explain and extend \cite{KN-AI26} the proof, again hard to read. The files were then extended in \cite{BRL26,PercolationNotes26}.
All those poorly written files left readers with considerable work just to check the proof.
Even the specialists who checked the proof were unsure about the details or why the proof works as a whole.

The goal of this paper is to provide a human-written proof accessible to most probabilists. It is reconstructed from the AI's ideas and differs from the original proof.
Notably, it has its own notation and explanations and avoids the "horizontal term", shortening the proof by almost half.

\textbf{AI statement.} AI has been extensively used to understand AI awful work, and for proofreading. 
\subsection{Main results}
We consider Bernoulli bond percolation on an arbitrary finite graph $\Gr$ such that each edge $e$ is independently open with probability $p_e\in [0,1]$. To avoid division by $0$, we assume that every edge satisfies $p_e\in (0,1)$, as the main results can be recovered by continuity with respect to $(p_e)_{e\in \Gr}$. 

We say that $x,y$ are connected and write $x\slr y$ if there is a path of open edges between them, and otherwise write $x\snlr y$. 
Let $\Comp$ be the set of clusters (connected subgraphs) of $\Gr$. For $x\in \Gr$, let $C_x$ be its cluster in the random percolated graph. For $S\subseteq \Gr$, let $C_S$ denote the set of vertices connected to $S$.  
For a random variable $X$ and an event $E$ we write $\E[X;E]=\E[X\1_E]$. 

We show the next result, which is a stronger version of Conjecture 4 of Kozma and Nitzan \cite{KN24}.
\begin{theorem} \label{thm:Main} Let $0,b\in \Gr$. Let $A\subseteq \Gr$. Let $\Fc:\Comp\mapsto \R$ be nondecreasing. Let $a$ be the vertex minimizing $\E[\Fc(C_a)]$ in $A$, chosen according to an arbitrary fixed order on $\Gr$ if equality. We have
\[ \E[\Fc(C_0); 0\slr A]\geq \E [\Fc(C_a) ; 0\slr A].\]
\end{theorem}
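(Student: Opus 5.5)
The plan is to prove a strengthened statement by induction on the number of edges of $\Gr$, revealing the cluster of $0$ one edge at a time. Two cases are trivial and serve as base cases. If $0\in A$, the event $\{0\slr A\}$ is sure, so the claim reduces to $\E[\Fc(C_0)]\ge \E[\Fc(C_a)]$, which holds by the choice of $a$ as minimiser over $A$. If $\Gr$ has no edges at $0$, both sides vanish.

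The basic tool for the induction is a conditioning identity. For a cluster $K$ with $0\in K$ and $K\cap A=\emptyset$, conditioning on $\{C_0=K\}$ makes the configuration outside $K\cup\partial K$ an independent percolation on $\Gr\setminus K$. Hence
\[
\E[\Fc(C_a);0\snlr A]=\sum_{K}\proba(C_0=K)\,\E_{\Gr\setminus K}[\Fc(C_a)] ,
\]
and the symmetric formula holds with the roles of $0$ and $A$ exchanged, by conditioning on $C_A$. By the monotone coupling, $\E_{\Gr\setminus K}[\Fc(C_a)]\le \E_{\Gr}[\Fc(C_a)]$, since deleting edges shrinks clusters and $\Fc$ is nondecreasing. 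Writing
\[
\E[\,\cdot\,;0\slr A]=\E[\,\cdot\,]-\E[\,\cdot\,;0\snlr A],
\]
the theorem becomes a comparison between the deficits $\E_{\Gr}-\E_{\Gr\setminus K}$ for the two vertices $0$ and $a$, weighted by the law of the cluster separating them from $A$.

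For the inductive step, pick an edge $e=\{0,y\}$ and condition on its state.
\begin{itemize}
\item If $e$ is closed, we pass to $\Gr\setminus e$, to which the induction hypothesis applies.
\item If $e$ is open, we contract $e$, merging $0$ and $y$ into a single vertex. The functional $\Fc$ induces a nondecreasing functional on clusters of the contracted graph, and the event $\{0\slr A\}$ becomes $\{\{0,y\}\slr A\}$.
\end{itemize}
Averaging the two conditional inequalities with weights $p_e$ and $1-p_e$ would then give the claim for $\Gr$.

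The main obstacle is that the minimiser $a$ is not stable under these operations. After closing or contracting $e$, the vertex minimising $\E[\Fc(C_\cdot)]$ over $A$ may change, so the induction hypothesis compares $0$ with the wrong point. To handle this, I would first try to prove a stronger, linear statement valid for every $a'\in A$, namely
\[
\E[\Fc(C_0);0\slr A]-\E[\Fc(C_{a'});0\slr A]\ \ge\ c\,\bigl(\E[\Fc(C_0)]-\E[\Fc(C_{a'})]\bigr)
\]
for a suitable nonnegative weight $c$, for example $c=\proba(0\slr A)$ or a conditional version of it. Such a statement is preserved under averaging over the state of $e$, and it reduces to the theorem when $a'=a$, because the right-hand side is then nonnegative. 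Identifying the right weight $c$, and checking that it is compatible with both closing and contracting $e$ through the conditioning identity above, is where I expect the real difficulty to lie.
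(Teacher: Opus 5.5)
Your plan has a genuine gap: it stops where the proof has to start. You correctly see that closing or contracting an edge at $0$ can change the minimiser $a$. However, the strengthened hypothesis meant to survive this is never found, and the candidate you write down fails for three reasons.

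(i) At $a'=a$ the right-hand side $c\,(\E[\Fc(C_0)]-\E[\Fc(C_a)])$ is not nonnegative in general. In the nontrivial case $0\notin A$, and minimality of $a$ is over $A$ only, so nothing compares $\E[\Fc(C_0)]$ with $\E[\Fc(C_a)]$. When this difference is negative, your inequality does not imply the theorem.

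(ii) The inequality is false already for $A=\{a'\}$. There the left-hand side is identically $0$, since $C_0=C_{a'}$ on $\{0\slr a'\}$. Take the path with edges $\{a',0\},\{0,z\}$ of probability $p$ and $\Fc(C)=|C|$. Then $\E|C_0|-\E|C_{a'}|=p(1-p)>0$, so every $c>0$ fails, in particular $c=\proba(0\slr A)$. On the other hand, $c=0$ would assert the theorem for every $a'\in A$, which is false for a non-minimising $a'$. For instance, let $0$ be a leaf attached only to $a_1$, and let $a_2$ be the centre of a disjoint star with $m$ leaves, where $mp>1$. Then $\E[|C_0|;0\slr A]=2p<p(1+mp)=\E[|C_{a_2}|;0\slr A]$.

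(iii) With a measure-dependent weight such as $\proba(0\slr A)$, the inequality is not preserved by averaging over the state of $e$. The term $c\cdot(\E[\Fc(C_0)]-\E[\Fc(C_{a'})])$ is then quadratic rather than linear in the underlying law. So the edge induction has no working hypothesis to carry.

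The paper handles the unstable minimiser differently. It keeps the graph and measure fixed and inducts on $|A|$, removing a vertex $b\in A\setminus\{a\}$ that \emph{maximises} $\E[\Fc(C_b)]$, so $a$ stays the minimiser of $A\setminus\{b\}$. The price is the correction term $\E[\Fc(C_b)-\Fc(C_a);x\slr b,\,b\snlr A\setminus\{b\}]$. This splits into conditional covariances plus a multiple of $\proba(x\slr b\mid b\snlr A\setminus\{b\})$, and by maximality of $b$ the coefficient is at least $-\Delta_{A\setminus\{b\},\Fc}(b)$. This yields $\pi_{A,\Bc}\Delta_{A,\Fc}\ge\pi_{A\setminus\{b\},(b)\uplus\Bc}\Delta_{A\setminus\{b\},\Fc}$. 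It therefore forces the correct strengthening: $\pi_{A,\Bc}\Delta_{A,\Fc}\ge 0$ for every removal sequence $\Bc$ (Theorem~\ref{thm:Main_with_removal}).

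The real work then sits in Theorem~\ref{thm:Projection_Covariance}: nonnegativity of the projected conditional covariances $\pi_{A\cup\{b\},\Bc}\kappa_{b,A}\Fc$. This is proved by a second induction on $|\Bc|$, using the off method and a Gibbs resampling argument. Your proposal has no analogue of a stable choice of what to remove, of the projection operators, or of a covariance positivity stable under projection. As it stands it restates the difficulty rather than giving a strategy to overcome it.
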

In particular, by taking $F:C\mapsto \1_{b\in C}$ and using Harris inequality, one recovers Conjectures 1 and 2 of Kozma and Nitzan, as well as Question 7.
\[ \proba(0\slr b,0\slr A)\geq \min(\proba(a\slr b,0\slr A):a\in A) \geq \proba(0\slr A)\min(\proba(a\slr b):a\in A) . \]
Kozma and Nitzan only used their weakest conjecture, Conjecture 3, which is a weaker version of Conjecture 1, in their proof of $\theta(p_c)=0$. We do not adress Conjecture 6 and Questions 5 and 9, which was deduced from Theorem \ref{thm:Main} by Bou-Rabee \cite{KN-AI26}. Question 8 has a counterexample in \cite{KN-AI26}.

We write the difference of the two sides of Theorem \ref{thm:Main} as $\Delta_{A,\Fc}(0)$. 
We prove Theorem \ref{thm:Main} by induction on $|A|$, by removing vertices one by one. Removing the vertex $b$ from $A\cup\{b\}$ corresponds to an operator $\pi_{A,b}$ defined as follows. 
 For every $\Fc:\Gr\mapsto \R$, for every $A\subseteq \Gr$, $x\in \Gr, b\notin A$, let 
\[ \pi_{A,b}\Fc(x):=\Fc(x)-\Fc(b)\proba(x\slr b |b\snlr A).\]
To remove several vertices, for a sequence $\Bc=(b_1,\dots,b_k)$ of different vertices not in $A$, we use
\[ \pi_{A,\Bc}:=\pi_{A\cup\{b_1,\dots, b_{k-1}\},b_k}\circ \dots \circ \pi_{A\cup \{b_1\},b_2} \circ \pi_{A,b_1}. \] 
When $\Bc$ is the empty sequence, written $\emptyset$, we define by convention $\pi_{A,\emptyset}$ to be the identity operator.  The order of the operators may seem counter-intuitive, but one may see $b_1,b_2,\dots, b_k$ as the vertices we have already removed from $A$. 
Because of that point of view, our induction requires us to show the following stronger result.
Let $\GrSeq$ be the set of sequences $(b_1, b_2,\dots,b_k)$ of different vertices of $\Gr$, possibly empty. With a slight abuse of notation, we may view $\GrSeq$ as the set of subsets of $\Gr$ and use the usual set operations with it. Also, if $\Bc_1,\Bc_2\in \GrSeq$ are disjoint, we write the natural concatenation of $\Bc_1$ followed by $\Bc_2$ as $\Bc_1\uplus \Bc_2\in \GrSeq$.

\begin{theorem} \label{thm:Main_with_removal} Let $A\subseteq \Gr$. Let $\Bc\in \GrSeq$ with $\Bc\cap A=\emptyset$. We have $\pi_{A,\Bc}\Delta_{A,\Fc}\geq 0$ pointwise on $\Gr \backslash (A\cup \Bc)$ for every nondecreasing $\Fc: \Comp \mapsto \R$.
\end{theorem}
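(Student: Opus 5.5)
The plan is to argue by induction on $|A|$, while letting $\Bc$ range over all admissible sequences at each stage; this is exactly why the statement is phrased for general $\Bc$. Where needed, I will also induct on the number of vertices of $\Gr$, so that the results may be applied to percolation on subgraphs.

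\textbf{Base case.} Take $|A|=1$, say $A=\{a\}$. On the event $x\slr a$ we have $C_x=C_a$, so $\Delta_{A,\Fc}\equiv 0$. Since every $\pi_{A,b}$ is linear, $\pi_{A,\Bc}\Delta_{A,\Fc}\equiv 0$ as well.

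\textbf{Inductive step.} Let $|A|\ge 2$ and let $a$ be the minimiser in $A$. Pick some $c\in A\setminus\{a\}$ and set $A''=A\setminus\{c\}$. Because $c\neq a$, the minimiser of $\E[\Fc(C_\cdot)]$ over $A''$ is still $a$, so $\Delta_{A,\Fc}$ and $\Delta_{A'',\Fc}$ are built from the same vertex $a$. By definition,
\[\pi_{A'',c\uplus\Bc}=\pi_{A,\Bc}\circ\pi_{A'',c},\]
and the induction hypothesis gives $\pi_{A'',c\uplus\Bc}\Delta_{A'',\Fc}\ge 0$ on $\Gr\setminus(A\cup\Bc)$.

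The first key step is to compare $\Delta_{A,\Fc}$ with $\Delta_{A'',\Fc}$. For this I split
\[\{x\slr A\}=\{x\slr A''\}\sqcup\{x\slr c,\ c\snlr A''\}.\]
On the second event $C_x=C_c$, so
\[\Delta_{A,\Fc}(x)=\Delta_{A'',\Fc}(x)+\E[\Fc(C_c)-\Fc(C_a);\,x\slr c,\ c\snlr A''].\]
I then condition on the cluster $C_c=S$, where $S$ avoids $A''$. Given this, percolation on $\Gr\setminus S$ is independent Bernoulli percolation on a smaller graph. The term $\E[\Fc(C_a);\,x\slr c,\ c\snlr A'']$ is then a mixture, over $S\ni x,c$, of quantities of the form $\E^{\Gr\setminus S}[\Fc(C_a)]\,\proba(C_c=S)$.

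Next I apply $\pi_{A'',c}$. The useful observation is that any function of the form $x\mapsto \kappa\,\proba(x\slr c\mid c\snlr A'')$ is killed by $\pi_{A'',c}$, since its value at $c$ is $\kappa$. So the part of the correction that depends on $x$ only through $\proba(x\slr c\mid c\snlr A'')$ disappears. What remains, say $R(x)$, is a combination of differences $\E[\Fc(C_a)]-\E^{\Gr\setminus S}[\Fc(C_a)]$, averaged over clusters $S$ weighted by $\1_{x\in S}$. I expect each such difference to be $\ge 0$, because $\Fc$ is nondecreasing and removing $S$ can only shrink $C_a$, which lets me hope for $R\geq0$. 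This would give
\[\pi_{A'',c}\Delta_{A,\Fc}=\pi_{A'',c}\Delta_{A'',\Fc}+R.\]

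\textbf{Conclusion and main obstacle.} Applying $\pi_{A,\Bc}$ to this identity yields
\[\pi_{A,\Bc}\pi_{A'',c}\Delta_{A,\Fc}=\pi_{A'',c\uplus\Bc}\Delta_{A'',\Fc}+\pi_{A,\Bc}R,\]
and the first term on the right is nonnegative by induction. Two things remain.

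First, the identity above is for $\pi_{A'',c}\Delta_{A,\Fc}$, but the theorem concerns $\pi_{A,\Bc}\Delta_{A,\Fc}$ itself. I expect to pass from one to the other using the decomposition
\[\Delta_{A,\Fc}=\pi_{A'',c}\Delta_{A,\Fc}+\Delta_{A,\Fc}(c)\,\proba(\cdot\slr c\mid c\snlr A''),\]
and tracking how the second term transforms under $\pi_{A,\Bc}$. Note that $\Delta_{A,\Fc}(c)=\E[\Fc(C_c)]-\E[\Fc(C_a)]\ge 0$ by minimality of $a$.

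Second, and this is the step I expect to be the main obstacle, I must show $\pi_{A,\Bc}R\ge 0$. The operators $\pi$ are not positivity-preserving, so $R\ge 0$ alone is not enough. My plan is to write $R$ as a mixture, over conditioned clusters $S$, of functions of the same form $\Delta$ for percolation on $\Gr\setminus S$ with a modified nondecreasing $\Fc$. I would then check that $\pi_{A,\Bc}$ commutes with this conditioning up to operators of the same type on the smaller graph. This uses the Markov property that conditioning on $C_b$ with $b\snlr A$ leaves independent percolation outside. Nonnegativity would then follow from the induction on the size of the graph.
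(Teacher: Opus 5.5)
There is a genuine gap, although your skeleton is the right one and matches the paper's. Combining your two identities gives, pointwise on $\Gr$, with $p(x)=\proba(x\slr c\mid c\snlr A'')$,
\[\Delta_{A,\Fc}=\pi_{A'',c}\Delta_{A'',\Fc}+R+\Delta_{A,\Fc}(c)\,p .\]
The paper writes the last term as the lower bound $\E[\Fc(C_c)-\Fc(C_a);c\snlr A'']\ge-\Delta_{A'',\Fc}(c)$. After applying $\pi_{A,\Bc}$ and the induction hypothesis, everything therefore reduces to two facts on $\Gr\setminus(A\cup\Bc)$: $\pi_{A,\Bc}R\ge 0$ and $\pi_{A,\Bc}p\ge 0$. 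Neither is proved in your proposal, and together they are the whole difficulty. Moreover, $R$ is not what you describe. The operator $\pi_{A'',c}$ only removes the mean, so
\[R(x)=\proba(c\snlr A'')\,\Cov\bigl(\Fc(C_c)-\E_{\off C_c}[\Fc(C_a)],\1_{x\slr c}\mid c\snlr A''\bigr)=\proba(c\snlr A'')\,[(\kappa_{c,A''}-\kappa_{a,c,A''})\Fc](x).\]
This is a conditional covariance in which the $\Fc(C_c)$ part survives. It is not an average of the nonnegative differences $\E[\Fc(C_a)]-\E_{\off S}[\Fc(C_a)]$. Even its pointwise sign is the case $\Bc=\emptyset$ of the paper's covariance theorem, not a consequence of the monotonicity you invoke.

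The missing ingredient is Theorem~\ref{thm:Projection_Covariance} together with Corollary~\ref{cor:Projection_Covariance}. For every nondecreasing $\Fc$ they give $\pi_{A\cup\{b\},\Bc}\kappa_{b,A}\Fc\ge 0$ and $\pi_{A\cup\{b\},\Bc}\kappa_{a,b,A}\Fc\le 0$. Apply them with $(A'',c)$ in place of $(A,b)$, so that $A''\cup\{c\}=A$:
\begin{itemize}
\item they give $\pi_{A,\Bc}R\ge 0$ directly;
\item they also give $\pi_{A,\Bc}p\ge 0$. Off $A$ one has $p=\proba(|C_c|=1\mid c\snlr A'')^{-1}\,\kappa_{c,A''}\Gc$ with $\Gc(C)=\1_{|C|\ge 2}$, and $\pi_{A,\Bc}$ only uses values off $A$.
\end{itemize}

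Proving that theorem is a separate induction on $|\Bc|$ of quite a different nature. The projections are pushed onto $\1_{x\slr b}$, where they successively produce $\1_{x\slr\{b,b_1,\dots,b_i\}}$. The corrections are of the form $\kappa_{b_{i+1},A\cup\{b,b_1,\dots,b_i\}}\Phi$ with $\Phi$ nondecreasing, and are handled by the induction hypothesis. The conditioning on $C_A$ is then removed by a Gibbs-sampler argument that alternately resamples $C_A$ and $C_b$.

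Your sketch (a mixture over $\{C_c=S\}$ of $\Delta$-type functions on $\Gr\setminus S$, plus induction on $|\Gr|$) does not supply this, for two reasons:
\begin{itemize}
\item The natural decomposition $R(x)=\sum_S\proba(C_c=S)\1_{x\in S}\,(g(S)-\bar g)$, with $g(S)=\Fc(S)-\E_{\off S}[\Fc(C_a)]$ and $\bar g$ its conditional mean, has summands supported on $S$. They are not $\Delta$-functions of percolation on $\Gr\setminus S$.
\item $\pi_{A,\Bc}$ is built from connection probabilities in the whole graph $\Gr$. Under this conditioning they do not turn into operators of the same type on $\Gr\setminus S$.
\end{itemize}
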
 
Note that when $\Bc=\emptyset$, Theorem \ref{thm:Main_with_removal} states that $\Delta_{A,\Fc}\geq 0$, which is exactly Theorem \ref{thm:Main}. This deals with the case $0\notin A$. The complementary case, where $0\in A$, is essentially trivial. 

The technical part of the proof is to show the next result. We defer its proof to Section \ref{Sec:TechnicalPart}, so that we can first see how to use it. For every $\Fc:\Comp\mapsto \R$, $A\subseteq \Gr$, $b\in \Gr\backslash A$, $x\in \Gr$, let
\[ [\kappa_{b,A}\Fc](x):=\Cov \left (\Fc(C_b);\1_{x\slr b} \middle |b\snlr A\right ).\]
\begin{theorem} \label{thm:Projection_Covariance}For every nondecreasing $\Fc:\Comp\mapsto \R$, every $\Bc\in \GrSeq$, $A\subseteq \Gr$ with $A\cap \Bc=\emptyset$ and $b\in \Gr \backslash (A\cup \Bc)$, we have $[\pi_{A\cup \{b\},\Bc}\circ \kappa_{b,A}]\Fc\geq 0$ pointwise on $\Gr\backslash (A\cup \Bc\cup\{b\})$. 
\end{theorem}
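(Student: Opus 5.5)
Throughout, write $\mu_{A,b}(\cdot)=\proba(\cdot\mid b\snlr A)$. I argue by induction on $|\Bc|$, with $A$, $b$ and $\Fc$ arbitrary at each stage.

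\textbf{Base case $\Bc=\emptyset$.} The claim is that $\Cov(\Fc(C_b);\1_{x\slr b}\mid b\snlr A)\ge 0$. Under the conditioning, the cluster $C_b$ avoids $A$. Revealing $C_b$ by exploration from $b$ only inspects edges with at least one endpoint in $C_b$. Given $C_b$, the remaining edges are still i.i.d. Bernoulli, and the event $\{b\snlr A\}$ is measurable with respect to $C_b$. Hence the conditional law of $C_b$ given $b\snlr A$ is the law of $C_b$ restricted to the decreasing-in-$A$ event $\{C_b\cap A=\emptyset\}$.

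Both $\Fc(C_b)$ and $\1_{x\in C_b}$ are increasing functions of $C_b$, so I would like Harris. The conditioning is on a decreasing event, however, so Harris does not apply directly. Instead I would use the following equality in law: conditionally on $b\snlr A$, the cluster $C_b$ has the law of the cluster of $b$ in percolation on $\Gr$ with the edges touching $A$ removed, further conditioned on not touching $A$. More simply, one shows that $\mu_{A,b}$, restricted to configurations whose cluster of $b$ avoids $A$, is still a positively associated measure on edge-configurations of $\Gr\setminus A$ as far as increasing functions of $C_b$ are concerned. One can also condition on the outer boundary of $C_b$. I would check this with the BK/FKG style decomposition $\E[\Fc(C_b)\1_{x\in C_b}\1_{C_b\cap A=\emptyset}]$ summed over clusters $S\ni b$.

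\textbf{Inductive step.} Let $\Bc=\Bc'\uplus(c)$. By definition,
\[
\pi_{A\cup\{b\},\Bc}\kappa_{b,A}\Fc(x)=G(x)-G(c)\,\proba(x\slr c\mid c\snlr A\cup\{b\}\cup\Bc'),
\qquad G=\pi_{A\cup\{b\},\Bc'}\kappa_{b,A}\Fc .
\]
The key step is to rewrite this subtraction of the $c$-term as a covariance-type expression of the same shape with one fewer removed vertex. I would decompose according to whether $x\slr c$ avoiding $A\cup\{b\}\cup\Bc'$, using a Markov/exploration property: conditionally on $C_c$ avoiding the set, the configuration outside $C_c$ is fresh percolation on $\Gr\setminus C_c$. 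This should express the left side as an average over clusters $S=C_c$ of quantities
\[
\pi_{A\cup\{b\},\Bc'}\kappa^{\Gr\setminus S}_{b,A}\Fc_S(x)
\]
on the reduced graph $\Gr\setminus S$, with the modified nondecreasing functional $\Fc_S(C)=\Fc(C\cup S)$ or $\Fc(C)$. The induction hypothesis, applied on the smaller graph, then gives nonnegativity. The theorem is stated for arbitrary finite graphs with arbitrary $p_e$, which is exactly what allows passing to $\Gr\setminus S$.

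\textbf{Main obstacle.} The delicate point is this exact algebraic identity. Terms in which $x$ is connected to $c$ must cancel precisely against $G(c)\proba(x\slr c\mid\cdot)$, leaving only contributions from $x\snlr c$. Those remaining contributions are what need to be handled on $\Gr\setminus C_c$. The cluster of $b$ may also merge with that of $c$; this produces a correction term, and I expect it to have a sign given by the base-case covariance (Harris). I would first try ordering the explorations so that $C_c$ is revealed before $C_b$.
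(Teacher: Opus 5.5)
Your proposal does not yet prove the statement. The base case rests on an incorrect reduction, and the inductive step rests on an identity you hope for but never derive.

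\textbf{Base case.} Conditionally on $b\snlr A$, the cluster $C_b$ is \emph{not} distributed as the cluster of $b$ in $\Gr$ with the edges touching $A$ closed; the extra conditioning you add there is vacuous. It is that law \emph{tilted} by the decreasing weight $\prod_e(1-p_e)$ over the edges joining $C_b$ to $A$. For instance, on the path with edges $\{b,y\},\{y,a\}$ of parameter $p$ and $A=\{a\}$, one gets $\proba(y\in C_b\mid b\snlr A)=p/(1+p)$, not $p$. This tilt violates the FKG lattice condition, because merging can make $C_b(\omega\vee\eta)$ strictly larger than $C_b(\omega)\cup C_b(\eta)$. So Harris gives nothing directly, and your sentence that $\mu_{A,b}$ ``is still positively associated'' is exactly what must be proved.

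In the paper, even the case $\Bc=\emptyset$ needs the whole second half of the argument. One splits the conditional covariance by the law of total covariance with respect to $C_A$. The inner term $\E[\Cov_{\off C_A}(\Fc(C_b);\1_{x\slr b})\mid b\snlr A]$ is nonnegative by Harris, since under $\proba_{\off C_A}$ no conditioning remains. The outer term equals $\langle \Tf\Fc,\Hc_x\rangle$, where $\Tf$ (resample $C_A$, then $C_b$) preserves monotonicity. Iterating, and using mixing of this alternating chain, gives $\langle \Tf^n\Fc,\Hc_x\rangle\to0$.

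\textbf{Inductive step.} The induction hypothesis gives $G\ge 0$, and in particular $G(c)\ge0$. So $G(x)-G(c)\,\proba(x\slr c\mid\cdot)$ subtracts a nonnegative term, and positivity of $G$ alone does not control it. Everything therefore hinges on your averaging identity over $S=C_c$, and it is unclear how that identity could hold. The operator $\kappa_{b,A}$ and the successive projections are conditional quantities under different events: $b\snlr A$, then $b_i\snlr A\cup\{b,b_1,\dots,b_{i-1}\}$, then $c\snlr A\cup\{b\}\cup\Bc'$. Conditioning on $C_c$ writes each of them as a mixture with a \emph{different} weight $\proba(C_c=S\mid\cdot)$. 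Products such as $G(c)\,\proba(x\slr c\mid\cdot)$ are then not mixtures of the corresponding products on $\Gr\setminus S$. So the induction hypothesis on the reduced graph cannot be invoked. In the paper, the analogous correction terms are projected conditional covariances, not Harris terms.

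The paper's inductive step works differently. After the $C_A$-split it handles only $\Sc(x)=\E[\Theta\1_{x\slr b}]$, where $\Theta=\1_{b\snlr A}(\Fc(C_b)-\E_{\off C_A}[\Fc(C_b)])$ satisfies $\E[\Theta\mid(C_r)_{r\in A}]=0$. This centring kills both the constant term and the case $b_{i+1}\slr A$. Writing $B_i=\{b,b_1,\dots,b_i\}$, each projection then turns $\E[\Theta\1_{x\slr B_i}]$ into $\E[\Theta\1_{x\slr B_{i+1}}]$ plus $\proba(b_{i+1}\snlr A\cup B_i)\,\kappa_{b_{i+1},A\cup B_i}\Phi$, with the \emph{new} functional $\Phi(K)=-\E_{\off K}[\Theta]$. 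One proves $\Phi$ is nondecreasing, and its projected covariances are then signed by the induction hypothesis on the \emph{same} graph, with base vertex $b_{j+1}$, set $A\cup B_j$ and a shorter tail of $\Bc$. The final term $\E[\Theta\1_{x\slr B_k}]$ is nonnegative by Harris under $\proba_{\off C_A}$.

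None of these ingredients appears in your plan: the split along $C_A$, the centred variable $\Theta$, the monotone functional $\Phi$, and the Gibbs iteration.
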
 
Theorem \ref{thm:Projection_Covariance} morally says that after conditioning on $b\snlr A$ and removing the vertices of $\Bc$, the random variables $\Fc(C_b)$ and $\1_{x\slr b}$ remain positively correlated.
We also use the next corollary of Theorem \ref{thm:Projection_Covariance}, which morally says that $\Fc(C_a)$ and $\1_{x\slr b}$ stay negatively correlated after removing $\Bc$. For every $\Fc:\Comp\mapsto \R$, $A\subseteq \Gr$, $a\in A$, $b\in \Gr\backslash A$, $x\in \Gr$, let
\[ [\kappa_{a,b,A}\Fc](x):=\Cov \left (\Fc(C_a);\1_{b\slr x} \middle |b\snlr A\right ).\]
 For a set or cluster $S\subseteq \Gr$, let $\proba_{\off S}$, $\E_{\off S}$, $\Cov_{\off S}$ be, respectively, the probability measure, expectation, and covariance after forcing all edges adjacent to $S$ to be closed.
\begin{corollary} \label{cor:Projection_Covariance} For every nondecreasing $\Fc:\Comp\mapsto \R$, every $\Bc\in \GrSeq$, $A\subseteq \Gr$ with $A\cap \Bc=\emptyset$, $a\in A$, and $b\in \Gr \backslash (A\cup \Bc)$, we have $[\pi_{A\cup \{b\},\Bc}\circ \kappa_{a,b,A}]\Fc\leq 0$ pointwise on $\Gr\backslash (A\cup \Bc\cup\{b\})$. 
\end{corollary}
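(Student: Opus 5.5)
The plan is to reduce the corollary to Theorem~\ref{thm:Projection_Covariance}, applied to an auxiliary nondecreasing function of $C_b$ instead of a function of $C_a$. The mechanism is to condition on the cluster $C_b$: on the event $b\snlr A$, this cluster is disjoint from $A$, and in particular from $a$.

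\textbf{Step 1: define the auxiliary function.} For a cluster $T\in\Comp$, set
\[ \Gc(T):=\E_{\off T}[\Fc(C_a)], \]
where $C_a$ is computed in the percolation with all edges adjacent to $T$ closed. If $T\subseteq T'$, then forcing the edges adjacent to $T'$ closed closes more edges, so $C_a$ can only shrink. Since $\Fc$ is nondecreasing, $\Gc$ is therefore nonincreasing in $T$. This holds for every $T$, including those containing $a$, where $C_a=\{a\}$. Hence $-\Gc$ is a nondecreasing function $\Comp\to\R$.

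\textbf{Step 2: rewrite the covariance using the cluster of $b$.} Fix $T\in\Comp$ with $b\in T$ and $T\cap A=\emptyset$. The event $\{C_b=T\}$ depends only on the edges inside $T$ and the edges adjacent to $T$, and it forces the latter to be closed. The cluster $C_a$ is determined by the remaining edges together with those boundary edges, which must be closed. By independence of the edges,
\[ \E[\Fc(C_a)\mid C_b=T]=\E_{\off T}[\Fc(C_a)]=\Gc(T). \]
The event $b\snlr A$ is a union of such events $\{C_b=T\}$, and $\1_{x\slr b}=\1_{x\in C_b}$. Conditioning on $C_b$ therefore gives
\[ \E[\Fc(C_a)\1_{b\slr x}\mid b\snlr A]=\E[\Gc(C_b)\1_{x\slr b}\mid b\snlr A], \qquad \E[\Fc(C_a)\mid b\snlr A]=\E[\Gc(C_b)\mid b\snlr A]. \]
It follows that $[\kappa_{a,b,A}\Fc](x)=[\kappa_{b,A}\Gc](x)=-[\kappa_{b,A}(-\Gc)](x)$ for every $x$.

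\textbf{Step 3: apply the theorem.} Each $\pi_{A',b'}$ is linear in the function it acts on, because it only subtracts the value at $b'$ times a fixed weight. Hence $\pi_{A\cup\{b\},\Bc}$ is linear. Applying Theorem~\ref{thm:Projection_Covariance} to the nondecreasing function $-\Gc$ gives $[\pi_{A\cup\{b\},\Bc}\circ\kappa_{b,A}](-\Gc)\geq 0$ on $\Gr\backslash(A\cup\Bc\cup\{b\})$. Negating yields $[\pi_{A\cup\{b\},\Bc}\circ\kappa_{a,b,A}]\Fc\leq 0$ there, which is the claim.

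\textbf{Main obstacle.} The only delicate point is the domain Markov identity $\E[\Fc(C_a)\mid C_b=T]=\E_{\off T}[\Fc(C_a)]$ in Step~2. It needs a careful statement that $\{C_b=T\}$ is measurable with respect to the edges touching $T$ and forces the boundary edges closed. The assumption $p_e\in(0,1)$ guarantees that all the conditionings involved are on events of positive probability. Everything else is linearity.
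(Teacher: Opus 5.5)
Your proposal is correct and takes the same route as the paper. The paper also uses the off method to rewrite $\kappa_{a,b,A}\Fc$ as $\kappa_{b,A}$ applied to $C\mapsto \E_{\off C}[\Fc(C_a)]$, then applies Theorem \ref{thm:Projection_Covariance} to the nondecreasing function $C\mapsto -\E_{\off C}[\Fc(C_a)]$; you simply spell out the conditioning on $C_b$ and the linearity behind the sign flip in more detail.
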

\begin{proof} Using Aizenman's off method, we may rewrite $\kappa_{a,b,A}$ as 
\[ [\kappa_{a,b,A}\Fc](x) =\Cov \left (\E[\Fc(C_a)|C_b];\1_{b\slr x} \middle |b\snlr A\right )=\Cov \left (\E_{\off C_b}[\Fc(C_a)];\1_{b\slr x} \middle |b\snlr A\right ).
\]
The result follows by applying Theorem \ref{thm:Projection_Covariance} with $\Fc':C\mapsto -\E_{\off C}[\Fc(C_a)]$, which is nondecreasing. Indeed for larger $C$, we remove more edges, so after removal $C_a$ and $\Fc(C_a)$ decreases.
\end{proof}

\subsection{Some remarks}
One may notice that the operators $\pi_{A,b}$ are actually orthogonal projectors. Indeed, for every $A\subseteq \Gr$, one may consider the kernel defined by $K_A(u,v):= \proba(u\slr v,u\snlr A)$. This kernel appeared in earlier percolation papers and is positive definite on $\Gr\backslash A$. Indeed, for every $\Fc:\Gr\mapsto \R$, we have 
\[ \sum_{u,v} \Fc(u)K_A(u,v)\Fc(v)=\E\Big [\sum_{C,C\cap A=\emptyset} \Big ( \sum_{u\in C} \Fc(u) \Big )^2 \Big ]\geq 0,\]
 where we sum over clusters $C$ of $\Gr$. Using some linear algebra, one can show that $\pi_{A,b}$ is the orthogonal projector onto the set of functions with $\Fc(b)=0$ in the Hilbert space with reproducing kernel $K_A$. These Hilbert spaces were already well known.
 
 The fact that the operators are orthogonal projections is probably the main reason why the calculation works so well when working with covariance. However, we could not make use of this intuition to write the proof, as the main difficulty is that the kernel changes with each projection. We have the intuition that those changes of space are the main mechanism that produces the extra covariance factor throughout the proof, but we also could not use it.
 
Another reason why the computations works so well is Aizenman's off method, which allows to transfer many conditioning to another percolation model where some edges are forced to be closed. Morally after revealing some clusters, the other clusters behave as if all the edges adjacent to the revealed part would be originally closed.
 This is one of the most powerful tools to study Benoulli percolation, expecially in high dimensions. (See e.g. \cite{BR26} for a recent formalism and extended use.)
 
The proof works verbatim if one also consider hyperedges in $\Gr$ that can connect several vertices at once. This allows to deduce the same correlations inequalities for site percolation, by considering the dual graphs where sites now becomes hyperedges, and edges becomes vertices. 
Then Kozma and Nitzan's proof of $\theta(p_c)=0$ still works for site percolation on $\Z^d$, $d\geq 3$. (See Bou Rabee \cite{PercolationNotes26,BRL26})
\section{Inductive proof of Theorem \ref{thm:Main_with_removal} given Theorem \ref{thm:Projection_Covariance}} \label{Sec:First_Induction}
Fix $\Fc$. We prove that the result holds by induction on $|A|$ for every $\Bc\in \GrSeq$ with $\Bc\cap A=\emptyset$. First, if $|A|\leq 1$, then we directly have $\Delta_{A,\Fc}=0$. So let us assume $|A|\geq 2$. Take $a$ as in Theorem \ref{thm:Main}. Let $b\neq a \in A$ be a vertex in $A$ maximizing $\E[\Fc(C_b)]$. We aim to remove $b$ from the set $A$. Assume by the induction hypothesis that Theorem \ref{thm:Main_with_removal} holds when $A$ is replaced by $A\backslash \{b\}$. We compare $\Delta_{A,\Fc}$ with $\Delta_{A\backslash \{b\},\Fc}$. Fix $x\in \Gr\backslash (A\cup \Bc)$.
 To this end, by splitting the expectation defining $\Delta_{A,\Fc}(x)$ according to whether $x\slr A\backslash b$ holds or not, we get 
\begin{align*} \Delta_{A,\Fc}(x) & =\Delta_{A\backslash \{b\},\Fc}(x)+ \E[\Fc(C_x)-\Fc(C_a);x\slr b;b\snlr A\backslash \{b\}]
  \\ & =\Delta_{A\backslash \{b\},\Fc}(x)+ \E[\Fc(C_b)-\Fc(C_a);x\slr b|b\snlr A\backslash \{b\}]\proba(b\snlr A\backslash \{b\}).\end{align*}
using $C_x=C_b$ when $x\slr b$, and $\proba(b\snlr A\backslash \{b\})\neq 0$ for the last equality.

We may rewrite the last expectation as 
\begin{equation} \Cov(\Fc(C_b)-\Fc(C_a),\1_{x\slr b}|b\snlr A\backslash \{b\})+\E[\Fc(C_b)-\Fc(C_a)|b\snlr A\backslash \{b\}]\proba(x\slr b|b\snlr A\backslash\{b\}).
  \label{eq:Leftover_Main_Induction}
\end{equation}
We need to understand how $\pi_{A,\Bc}$ operates on the above display when seen as a function of $x$. Note that $\pi_{A,\Bc}$ is a linear operator, so we may deal with each of the terms separately. First, using the definition of $\kappa$, we recognize the covariance term as $[(\kappa_{b,A\backslash \{b\}}-(\kappa_{a,b,A\backslash \{b\}}))\Fc](x)$, and Theorem \ref{thm:Projection_Covariance} with Corollary \ref{cor:Projection_Covariance} says that applying $\pi_{A,\Bc}$ gives something non-negative.
Next, since $x\notin A$, we have $|C_b|\geq 2$ whenever $x\slr b$. Thus, the probability in the last display, when seen as a function of $x$, is proportional to  
\[ \Cov(\1_{|C_b|\geq 2}; \1_{x\slr b}|b\snlr A\backslash \{b\})=[\kappa_{b,A\backslash \{b\}}\Gc](x),\]
where $\Gc:C\mapsto \1_{|C|\geq 2}$. So by Theorem \ref{thm:Projection_Covariance}, applying $\pi_{A,\Bc}$ to the probability in \eqref{eq:Leftover_Main_Induction} gives something nonnegative. In other words, by linearity of $\pi_{A,\Bc}$, to get a lower bound on $ \Delta_{A,\Fc}(x)$, we may replace the expectation in \eqref{eq:Leftover_Main_Induction} by a lower bound. Moreover, using $\E[\Fc(C_a)]\leq \E[\Fc(C_b)]$, we get
\[ \E[\Fc(C_b)-\Fc(C_a);b\snlr A\backslash \{b\}]\geq -\E[\Fc(C_b)-\Fc(C_a);b\slr A\backslash \{b\}] = -\Delta_{A\backslash \{b\},\Fc}(b).\]

In conclusion, applying $\pi_{A,\Bc}$ and using linearity gives
\[  [\pi_{A,\Bc} \Delta_{A,\Fc}](x) \geq  \pi_{A,\Bc}\Big [  \Delta_{A\backslash \{b\},\Fc}-\Delta_{A\backslash \{b\},\Fc}(b) \cdot \Big (y\mapsto \proba(y\slr b|b\snlr A\backslash\{b\} )\Big )\Big ](x). \]
We recognize the above function inside the bracket as exactly the operator $\pi_{A\backslash\{b\},b}$ applied to the function $\Delta_{A\backslash \{b\},\Fc}$. Therefore, we may rewrite the last inequality as 
\[  [\pi_{A,\Bc} \Delta_{A,\Fc}] (x) \geq  [\pi_{A,\Bc}\circ \pi_{A\backslash\{b\},b}]\Delta_{A\backslash \{b\},\Fc}(x)=[\pi_{A\backslash\{b\},(b)\uplus\Bc}]\Delta_{A\backslash \{b\},\Fc}(x)\geq 0, \]
using the induction hypothesis for the last inequality, which concludes the proof.
\section{Proof of Theorem \ref{thm:Projection_Covariance}} \label{Sec:TechnicalPart}
We prove by induction on $|\Bc|$ that Theorem \ref{thm:Projection_Covariance} holds for every $A\subseteq \Gr$ with $A\cap \Bc=\emptyset$, and every $\Fc$ nondecreasing. Fix $\Bc=(b_1,\dots, b_k)$, possibly empty, and assume that the theorem holds for every $\Bc'$ with $|\Bc'|<|\Bc|$. Fix $A\subseteq \Gr$ with $A\cap \Bc=\emptyset$. The proof is divided into two parts.  First, we show a conditioned sub-theorem (Theorem \ref{thm:PostOff}), using the induction hypothesis. Then, using a short Gibbs argument, we deduce Theorem \ref{thm:Projection_Covariance} by applying Theorem \ref{thm:PostOff} to many functions $\Fc$.
\subsection{The inductive part}
 We first prove the next result, which one may see as a post-off-method statement. 
\begin{theorem} \label{thm:PostOff} Let 
\[ \Sc:x\mapsto \E[\Cov_{\off C_A}(\Fc(C_b);\1_{x\slr b});b\snlr A].
\] 
For every $x\notin A\cup \Bc\cup \{b\}$, with $\Fc$ nondecreasing, we have $[\pi_{A\cup \{b\},\Bc}\Sc](x)\geq 0$. 
\end{theorem}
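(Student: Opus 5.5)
The plan is to peel off the first removed vertex $b_1$ of $\Bc=(b_1,\dots,b_k)$ and reduce to the induction hypothesis, that is, to Theorem \ref{thm:Projection_Covariance} and Corollary \ref{cor:Projection_Covariance} for $\Bc'=(b_2,\dots,b_k)$, which has $|\Bc'|<|\Bc|$. If $\Bc=\emptyset$ there is nothing to peel. Then $\Sc(x)=\sum_{K}\proba(C_A=K)\Cov_{\off K}(\Fc(C_b);\1_{x\slr b})$, where the sum runs over clusters $K$ not containing $b$. Under $\proba_{\off K}$ both $\Fc(C_b)$ and $\1_{x\slr b}$ are nondecreasing in the open edges, so Harris' inequality gives nonnegativity termwise.

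When $\Bc\neq\emptyset$, write $\pi_{A\cup\{b\},\Bc}=\pi_{A\cup\{b,b_1\},\Bc'}\circ\pi_{A\cup\{b\},b_1}$. The first step is to compute $\pi_{A\cup\{b\},b_1}\Sc(x)=\Sc(x)-\Sc(b_1)\proba(x\slr b_1\mid b_1\snlr A\cup\{b\})$. To do this I will use Aizenman's off method to split $\Sc$ according to whether $b_1\in C_A$.

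On the event $b_1\in C_A$, the conditioned law of $C_b$ is the same as if $A$ were replaced by $A\cup\{b_1\}$. On the complementary event, I reveal $C_{b_1}$ as well. The event $x\slr b$ then splits according to whether $b_1\slr b$, which lets me write $\Sc(b_1)$-type quantities in terms of covariances of $\Fc(C_b)$ and $\1_{b_1\slr b}$.

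The target identity has the form
\[ \pi_{A\cup\{b\},b_1}\Sc=\sum_i\lambda_i\,\kappa_{b,A\cup\{b_1\}}\Fc_i-\sum_j\mu_j\,\kappa_{a_j,b,A\cup\{b_1\}}\Gc_j+R,\]
with $\lambda_i,\mu_j\geq 0$ and $\Fc_i,\Gc_j$ nondecreasing. The $\Fc_i$ should be of the form $C\mapsto\E_{\off K}[\Fc(\cdot)\mid\cdot]$, averaged over revealed clusters, so that monotonicity is inherited exactly as in the proof of Corollary \ref{cor:Projection_Covariance}. The remainder $R$ should be a nonnegative combination of functions $y\mapsto\proba(y\slr b\mid b\snlr A\cup\{b_1\})$. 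As in Section \ref{Sec:First_Induction}, these are proportional to $\kappa_{b,A\cup\{b_1\}}\Gc$ with $\Gc=\1_{|C|\geq2}$.

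Once this identity holds, applying $\pi_{A\cup\{b_1\}\cup\{b\},\Bc'}$ finishes the proof. The $\lambda$-terms and $R$ are handled by Theorem \ref{thm:Projection_Covariance}, and the $\mu$-terms by Corollary \ref{cor:Projection_Covariance}, both with $A$ replaced by $A\cup\{b_1\}$ and $\Bc$ by $\Bc'$. This uses the induction hypothesis only for the shorter sequence.

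The main obstacle is the algebraic identity in the second paragraph. The subtracted term $\Sc(b_1)\proba(x\slr b_1\mid b_1\snlr A\cup\{b\})$ must exactly cancel the part of $\Sc(x)$ coming from paths $x\slr b_1\slr b$. Any leftover must have a sign compatible with the induction.

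I would first try to carry out this cancellation at the level of the conditional measure $\proba_{\off C_A\cup C_{b_1}}$. There, for $x$ and $b$ in different clusters, $\1_{x\slr b}$ factorises through $b_1$. I would then use the off-method identity $\proba(x\slr b_1, b_1\snlr A\cup\{b\})=\E[\proba_{\off C_{A\cup\{b\}}}(x\slr b_1);b_1\snlr A\cup\{b\}]$ to match the two pieces. If exact cancellation fails, the fallback is a Harris-type bound on the leftover. This bound would show that the leftover is a conditional covariance of $\Fc(C_b)$ with an increasing event. It could then be absorbed into the $\lambda$-terms.
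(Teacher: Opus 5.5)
There is a genuine gap, and it sits exactly at what you call the ``main obstacle''. Your whole argument rests on the target identity expressing $\pi_{A\cup\{b\},b_1}\Sc$ as a combination of $\kappa_{b,A\cup\{b_1\}}\Fc_i$, $\kappa_{a_j,b,A\cup\{b_1\}}\Gc_j$ and multiples of $y\mapsto\proba(y\slr b\mid b\snlr A\cup\{b_1\})$. You never derive this identity, and it is in fact false.

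Suppose every path from $x$ to $b$ passes through $b_1$. Then on $\{b\snlr A\cup\{b_1\}\}$ one has $\1_{x\slr b}=0$, so every term of your form vanishes at $x$. Now take $A=\{a\}$ with $a$ an isolated vertex. Take the path with vertices $b,b_1,x$ and edges $e_1=\{b,b_1\}$, $e_2=\{b_1,x\}$, with parameters $p_1,p_2$. Write $f_0\le f_1\le f_2$ for the values of $\Fc$ on $\{b\}$, $\{b,b_1\}$, $\{b,b_1,x\}$. Then $\Sc(y)=\Cov(\Fc(C_b);\1_{y\slr b})$ and $\proba(x\slr b_1\mid b_1\snlr A\cup\{b\})=p_2$. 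A direct computation gives
\[ [\pi_{A\cup\{b\},b_1}\Sc](x)=p_1p_2(1-p_2)(f_2-f_1), \]
which is positive for $\Fc(C)=\1_{x\in C}$.

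Your fallback does not rescue this. The leftover here is the correlation between $\Fc(C_b)$ and the event $\{x\slr b_1\}$. It cannot be absorbed into $\lambda$-terms, since those vanish at such $x$. More generally, once $|\Bc|\ge 2$, knowing only that a leftover is nonnegative is useless, because $\pi_{A\cup\{b,b_1\},\Bc'}$ does not preserve nonnegativity.

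The missing idea is that the projection should not cancel the paths through $b_1$; it should add $b_1$ to the target set. Write $\Sc(x)=\E[\Theta\1_{x\slr b}]$ with
\[ \Theta=\1_{b\snlr A}\bigl(\Fc(C_b)-\E_{\off C_A}[\Fc(C_b)]\bigr), \qquad \E[\Theta\mid (C_r)_{r\in A}]=0. \]
Set $B_i=\{b,b_1,\dots,b_i\}$ and $\Sc_i(x)=\E[\Theta\1_{x\slr B_i}]$. The identity $\1_{x\slr B_{i+1}}=\1_{x\slr B_i}+\1_{b_{i+1}\snlr B_i}\1_{x\slr b_{i+1}}$ and the off method then give
\[ \pi_{A\cup B_i,b_{i+1}}\Sc_i=\Sc_{i+1}+\proba(b_{i+1}\snlr A\cup B_i)\,\kappa_{b_{i+1},A\cup B_i}\Phi,\qquad \Phi(K)=-\E_{\off K}[\Theta], \]
with $\Phi$ nondecreasing.

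The correction is a covariance centred at the removed vertex $b_{i+1}$ and conditioned on $b_{i+1}\snlr A\cup B_i$. So the induction hypothesis is applied with base point $b_{i+1}$, set $A\cup B_i$ and sequence $(b_{i+2},\dots,b_k)$. It is not applied with base point $b$ and set $A\cup\{b_1\}$ as you planned.

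The main term is carried through all $k$ projections. Only at the end does one get $\Sc_k(x)=\E[\Cov_{\off C_A}(\Fc(C_b);\1_{x\slr B_k});b\snlr A]\ge 0$, by Harris. In the example above the correction vanishes, and the whole answer is the main term $\Sc_1(x)=\Cov(\Fc(C_b);\1_{x\slr b_1})$. That is precisely the piece your decomposition has no slot for.
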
 
When $\Bc=\emptyset$, Harris's inequality directly gives $\Sc(x)\geq 0$, so we may assume $|\Bc|\geq 1$. 

The main idea behind the proof of Theorem \ref{thm:PostOff} is that by linearity, instead of viewing $\pi_{A\cup \{b\},\Bc}$ as acting on $\Sc$, we may look at its action on $\1_{x\slr b}$. Since the operator $\pi_{A\cup \{b\},\Bc}$ appeared when we removed the vertices $b_1,b_2,\dots, b_k$, we may hope that applying it will somehow add them back. This is exactly what happens as its action produces, in order, $\1_{x\slr \{b,b_1\}}$,\dots, $\1_{x\slr \{b,b_1,\dots, b_k\}}$, with some correction terms. Naturally, the correction terms are similar to those found in the proof of Theorem \ref{thm:Main_with_removal}, since the proof goes in the opposite direction. This allows us to rewrite the corrections as covariance terms with the correct form and deal with them using the induction hypothesis. 

Beforehand, let us rewrite $\Sc$ in a more convenient form. 
\begin{lemma} \label{lem:Rewrite_Sc0} Let 
\[ \Theta:=\1_{b\snlr A} (\Fc(C_b)-\E_{\off C_A}[\Fc(C_b)]). \]
 For every $x\in \Gr$, we have 
\[ \S(x)=\E[\Theta\1_{x\slr b}] .\]
\end{lemma}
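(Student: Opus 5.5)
The plan is to unfold the conditional covariance $\Cov_{\off C_A}$ and use the off method twice. By definition, for a fixed realisation of $C_A$ with $b\notin C_A$,
\[ \Cov_{\off C_A}(\Fc(C_b);\1_{x\slr b}) = \E_{\off C_A}[\Fc(C_b)\1_{x\slr b}] - \E_{\off C_A}[\Fc(C_b)]\,\proba_{\off C_A}(x\slr b). \]
This quantity is a function of $C_A$. We multiply it by $\1_{b\snlr A}$, which is also $C_A$-measurable, and take the expectation over $C_A$.

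\textbf{First term.} The key ingredient is Aizenman's off method in the form: conditionally on $C_A$, on the event $b\snlr A$, the configuration off $C_A$ has law $\proba_{\off C_A}$. The reason is that the edges not touching $C_A$ are independent of the event $\{C_A=S\}$, which is determined by the edges adjacent to $S$ and inside $S$ alone. Therefore, for any function $G$ of the cluster of $b$, on $\{b\snlr A\}$ we have
\[ \E[G(C_b)\mid C_A]=\E_{\off C_A}[G(C_b)]. \]
The cluster $C_b$ in the real configuration coincides with the cluster of $b$ in the off configuration, because $b\notin C_A$ and all edges adjacent to $C_A$ are closed in the real configuration. Applying this to $G(C)=\Fc(C)\1_{x\in C}$ gives
\[ \E\bigl[\1_{b\snlr A}\,\E_{\off C_A}[\Fc(C_b)\1_{x\slr b}]\bigr]=\E[\1_{b\snlr A}\Fc(C_b)\1_{x\slr b}]. \]

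\textbf{Second term.} Here $\E_{\off C_A}[\Fc(C_b)]$ is $C_A$-measurable, so it can be moved inside the conditional expectation. On $\{b\snlr A\}$ we have $\proba_{\off C_A}(x\slr b)=\proba(x\slr b\mid C_A)$, by the same off-method identity. Hence
\[ \E\bigl[\1_{b\snlr A}\E_{\off C_A}[\Fc(C_b)]\proba_{\off C_A}(x\slr b)\bigr] = \E\bigl[\1_{b\snlr A}\E_{\off C_A}[\Fc(C_b)]\1_{x\slr b}\bigr]. \]
This follows from the tower property.

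\textbf{Conclusion.} Subtracting the second identity from the first gives $\Sc(x)=\E[\Theta\1_{x\slr b}]$.

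The main obstacle is only the careful justification of the off-method identity. One has to check that, when $x\in C_A$, both sides vanish. This holds because $x\slr b$ is then impossible on $\{b\snlr A\}$, in both the real and the off measures. One also has to check that the conditioning is on the whole cluster $C_A$, a finite partition of events, so no measurability issues arise. Everything else is linearity.
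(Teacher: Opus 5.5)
Your proof is correct and is essentially the paper's argument: you condition on $C_A$, use the off method to identify the conditional law of $C_b$ on $\{b\snlr A\}$ with its law under $\proba_{\off C_A}$, and then undo the conditioning by the tower property (the paper's ``law of total probability''). The only differences are cosmetic: you expand the covariance as $\E[XY]-\E[X]\E[Y]$ and treat the two terms separately, whereas the paper uses $\Cov(X,Y)=\E[(X-\E[X])Y]$ in one step; and your separate check for $x\in C_A$ is already covered by the general identity applied to $C\mapsto\Fc(C)\1_{x\in C}$.
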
 
\begin{proof} By splitting according to the value of $C_A$ and using a standard covariance formula, we have 
\[ \Sc(x)=\sum_{A\subseteq K\subseteq \Gr, b\notin K} \proba(C_A=K) \E_{\off K} [(\Fc(C_b)-\E_{\off K}[\Fc(C_b)])\1_{x\slr b}; b\snlr A].\]
The law of total probability then gives 
\[ \Sc(x)=\E[\1_{b\snlr A}(\Fc(C_b)-\E_{\off C_A}[\Fc(C_b)])\1_{x\slr b}]=\E[\Theta \1_{x\slr b}].\qedhere\]
\end{proof} 
One may understand $\Theta$ as the fluctuations of $\Fc(C_b)$ that remain after conditioning on $C_A$.
\begin{lemma} \label{lem:Off_Fluctuations} We have a.s. $\E[\Theta|(C_r)_{r\in A}]=0$. 
\end{lemma}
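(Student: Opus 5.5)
The plan is to condition on the joint cluster data $(C_r)_{r\in A}$ and compute the conditional law of $C_b$ via Aizenman's off method. The key observation is that the family $(C_r)_{r\in A}$ and the set $C_A=\bigcup_{r\in A}C_r$ carry essentially the same information. Indeed, $C_A$ determines each $C_r$ as the connected component of $r$ in the open subgraph restricted to $C_A$. Strictly speaking, $C_A$ as a vertex set together with the open edges inside it determines each $C_r$, and conversely $(C_r)_{r\in A}$ determines $C_A$.

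The first step is to note that the indicator $\1_{b\snlr A}$ is measurable with respect to $(C_r)_{r\in A}$, since it equals $\1_{b\notin C_A}$. It can therefore be pulled out of the conditional expectation:
\[ \E[\Theta\mid (C_r)_{r\in A}]=\1_{b\notin C_A}\Big(\E[\Fc(C_b)\mid (C_r)_{r\in A}]-\E_{\off C_A}[\Fc(C_b)]\Big). \]
Here I use that $\E_{\off C_A}[\Fc(C_b)]$, viewed as a random variable, is a function of $C_A$ alone, hence also $(C_r)_{r\in A}$-measurable.

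The second step is the standard off-method identity. On the event $\{(C_r)_{r\in A}=(K_r)_{r\in A}\}$ with $b\notin K:=\bigcup_r K_r$, this event is determined by the edges with at least one endpoint in $K$: the edges inside $K$ fix the clusters, and the boundary edges of $K$ are all closed. It is therefore independent of the states of the edges with no endpoint in $K$. Moreover, on this event, the cluster $C_b$ is the cluster of $b$ in the percolation restricted to the edges not adjacent to $K$. Consequently the conditional law of $C_b$ is exactly its law under $\proba_{\off K}$, so
\[ \E[\Fc(C_b)\mid (C_r)_{r\in A}=(K_r)_r]=\E_{\off K}[\Fc(C_b)]. \]
Substituting this into the first display gives $0$ on $\{b\notin C_A\}$, while the prefactor $\1_{b\notin C_A}$ kills the expression on the complement. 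Every such conditioning event has positive probability because $p_e\in(0,1)$ for all edges, so the conditional expectations are well defined pointwise.

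The only point needing care, and the main potential obstacle, is the measurability and independence claim in the second step. I must verify that the event $\{(C_r)_{r\in A}=(K_r)_r\}$ depends only on edges adjacent to $K$, and that $C_b$ equals the off-$K$ cluster of $b$ on this event. Both follow because $C_b$ cannot use any edge touching $K$: any such edge is either internal to $K$ or closed.
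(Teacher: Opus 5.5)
Your proof is correct and is the same argument the paper has in mind. The paper only says the lemma is an immediate consequence of Aizenman's off method. You spell out that off-method identity, $\E[\Fc(C_b)\mid (C_r)_{r\in A}]=\E_{\off C_A}[\Fc(C_b)]$ on $\{b\notin C_A\}$, and note that the prefactor $\1_{b\notin C_A}$ is $(C_r)_{r\in A}$-measurable; the opening comparison of $C_A$ with $(C_r)_{r\in A}$ is harmless but unnecessary, since you condition on $(C_r)_{r\in A}$ directly.
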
 
\begin{proof} This is an immediate consequence of Aizenman's off method. 
\end{proof} 

Then, for $1\leq i \leq k$, write 
\[ \Sc_i:x\mapsto \E[\Theta \1_{x\slr \{b,b_1,\dots, b_i\}}] \]
 and $\Sc_0=\Sc$. And let 
 \[ \Phi:K\subseteq \Gr \mapsto -\E_{\off K}[\Theta]. \]
  The following result states that iteratively applying the different operators of $\pi_{A\cup \{b\},\Bc}$ to $\Sc$ produces, in order, $\Sc_1,\Sc_2,\dots$ with some corrections that are covariances with, a priori, the right form.
\begin{lemma}\label{lem:OneStep_Induction} For every $0\leq i <k$, we have
 \[ [\pi_{A\cup \{b,b_1,\dots,b_i\},b_{i+1}}\Sc_i](x)=\Sc_{i+1}(x)+\proba(b_{i+1}\snlr A\cup \{b,b_1,\dots,b_i\})[\kappa_{b_{i+1},A\cup\{b,b_1,\dots, b_i\}}\Phi](x).\]
\end{lemma}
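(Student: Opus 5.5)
The plan is a direct computation. Write $B_i:=\{b,b_1,\dots,b_i\}$, $A_i:=A\cup B_i$ and $c:=b_{i+1}$, and note $c\notin A_i$. By definition,
\[ [\pi_{A_i,c}\Sc_i](x)=\Sc_i(x)-\Sc_i(c)\,\proba(x\slr c\mid c\snlr A_i).\]
I will first express $\Sc_{i+1}-\Sc_i$ as an explicit expectation. Then I will identify $\Sc_i(c)$. Comparing the two should give the claimed formula. The tools are Lemma~\ref{lem:Off_Fluctuations}, which gives $\E[\Theta\mid (C_r)_{r\in A}]=0$, and Aizenman's off method applied to the cluster $C_c$.

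\textbf{Step 1: the increment.} Since $\1_{x\slr B_{i+1}}-\1_{x\slr B_i}=\1_{x\slr c,\ c\snlr B_i}$, we get
\[ \Sc_{i+1}(x)-\Sc_i(x)=\E[\Theta;\,x\slr c,\ c\snlr B_i]. \]
I split the event $\{c\snlr B_i\}$ into two parts.
\begin{itemize}
\item On $\{c\slr A,\ c\snlr B_i\}$, the event $\{x\slr c,\ c\slr A,\ c\snlr B_i\}$ is measurable with respect to $(C_r)_{r\in A}$, because then $C_c=C_r$ for some $r\in A$. By Lemma~\ref{lem:Off_Fluctuations}, this part contributes $0$.
\item On the remaining event $\{c\snlr A_i\}$, I condition on $C_c$. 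This cluster is disjoint from $C_b$ and from $C_A$. By the off method, $\E[\Theta\mid C_c]=\E_{\off C_c}[\Theta]=-\Phi(C_c)$ on this event. This step uses that $\Theta$ depends only on $C_b$, on $C_A$, and on the law of $C_b$ off $C_A$, all of which live in the complement of $C_c$.
\end{itemize}
Hence
\[ \Sc_{i+1}(x)-\Sc_i(x)=-\E[\Phi(C_c);\,x\slr c,\ c\snlr A_i]. \]
Writing this via the conditional measure given $c\snlr A_i$, it equals
\[ -\proba(c\snlr A_i)\Big([\kappa_{c,A_i}\Phi](x)+\E[\Phi(C_c)\mid c\snlr A_i]\,\proba(x\slr c\mid c\snlr A_i)\Big). \]

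\textbf{Step 2: identify $\Sc_i(c)$.} Lemma~\ref{lem:Off_Fluctuations} gives $\E[\Theta]=0$, so $\Sc_i(c)=-\E[\Theta;\,c\snlr B_i]$. The same two-way split as in Step 1 (without the $x$) kills the part where $c\slr A$, and the off method converts the rest. This yields
\[ \Sc_i(c)=\E[\Phi(C_c);\,c\snlr A_i]=\proba(c\snlr A_i)\,\E[\Phi(C_c)\mid c\snlr A_i]. \]

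\textbf{Step 3: conclude.} Substituting Step 2 into Step 1 gives
\[ \Sc_i(x)=\Sc_{i+1}(x)+\proba(c\snlr A_i)[\kappa_{c,A_i}\Phi](x)+\Sc_i(c)\,\proba(x\slr c\mid c\snlr A_i). \]
Moving the last term to the left-hand side is exactly $[\pi_{A_i,c}\Sc_i](x)=\Sc_{i+1}(x)+\proba(c\snlr A_i)[\kappa_{c,A_i}\Phi](x)$, which is the lemma. The identity holds for every $x$, so no case analysis on $x$ is needed.

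The main obstacle is justifying the two measurability/off-method claims rigorously. The first is that events involving $c\slr A$ are $\sigma((C_r)_{r\in A})$-measurable, so that Lemma~\ref{lem:Off_Fluctuations} applies. The second is that conditioning on $C_c$ with $c\snlr A_i$ turns $\Theta$ into $\E_{\off C_c}[\Theta]$. For the latter I would check that, given $C_c=K$ with $K\cap A_i=\emptyset$, the configuration outside the edges adjacent to $K$ has law $\proba_{\off K}$. I would also check that the inner quantity $\E_{\off C_A}[\Fc(C_b)]$ inside $\Theta$ is computed identically under $\proba$ and $\proba_{\off K}$. This holds because, on the relevant event, $K$ is part of the revealed closed boundary in both cases.
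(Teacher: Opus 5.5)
Your proof is correct and is essentially the paper's computation, with different bookkeeping. The paper first rewrites $\pi_{A\cup B_i,b_{i+1}}$ applied to the random function $x\mapsto \1_{x\slr B_i}$ as $\1_{x\slr B_{i+1}}$ minus a constant minus a centered correction, and only then integrates against $\Theta$; you instead compute $\Sc_{i+1}-\Sc_i$ and $\Sc_i(b_{i+1})$ separately and let the mean terms cancel, using the same two ingredients (Lemma~\ref{lem:Off_Fluctuations} to kill $\E[\Theta]$ and the $b_{i+1}\slr A$ piece, and $\E[\Theta\mid C_{b_{i+1}}]=-\Phi(C_{b_{i+1}})$ on $\{b_{i+1}\snlr A\cup B_i\}$).

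Two small remarks. For $i=0$, your starting identity $\Sc_0(x)=\E[\Theta\1_{x\slr b}]$ is Lemma~\ref{lem:Rewrite_Sc0}, not a definition. Also, the check on the inner expectation is automatic: $\E_{\off C_A}[\Fc(C_b)]$ is a fixed function of the set $C_A$, computed in the original model without closing the edges around $K$. So only the conditional law of $(C_b,C_A)$ given $C_{b_{i+1}}=K$ matters, and that law is exactly the $\proba_{\off K}$-law.
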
  
\begin{proof} By linearity, we may look at the action of $\pi_{A\cup \{b,b_1,\dots,b_i\},b_{i+1}}$ on $x\mapsto \1_{x\slr \{b,b_1,\dots, b_i\}}$. To ease the notation, let $B_i=\{b,b_1,\dots, b_i\}$ with convention $B_0=\{b\}$. By definition of $\pi$, we have
\[ [\pi_{A\cup B_i,b_{i+1}} \1_{\cdot \slr B_i}](x)=\1_{x\slr B_i}-\1_{b_{i+1}\slr B_i}\proba(x\slr b_{i+1}|b_{i+1}\snlr A\cup B_i) .\]
We then make appear the expected term $\1_{x\slr B_{i+1}}$ and split the rest of the formula into a constant factor and a centered factor. We do so to make appear another covariance later. Using the identity $\1_{x\slr B_{i+1}}= \1_{x\slr B_i}+ \1_{b_{i+1}\snlr B_i} \1_{x\slr b_{i+1}}$, we get
\begin{align}  [\pi_{A\cup B_i,b_{i+1}} \1_{\cdot \slr B_i}](x)=\1_{x\slr B_{i+1}}- & \proba(x\slr b_{i+1}|b_{i+1}\snlr A\cup B_i)  \notag \\ 
- & \1_{b_{i+1}\snlr B_i} (\1_{x\slr b_{i+1}} -\proba(x\slr b_{i+1}|b_{i+1}\snlr A\cup B_i)). \label{eq:Unfold_Single_Operator} \end{align}

By definition of $\Sc_i$ and Lemma \ref{lem:Rewrite_Sc0} when $i=0$, we can look at $\E[\Theta\eqref{eq:Unfold_Single_Operator}]$. The term $\1_{x\slr B_{i+1}}$ yields $\Sc_{i+1}(x)$. The constant probability term gives $0$ when taking the expectation by Lemma \ref{lem:Off_Fluctuations}. It remains to understand the correction term 
\[ \E\left [\Theta \1_{b_{i+1}\snlr B_i} (\1_{x\slr b_{i+1}} -\proba(x\slr b_{i+1}|b_{i+1}\snlr A\cup B_i)) \right ]. \]
Note that if $b_{i+1}\slr A$, then the centered indicator is a function of $(C_r)_{r\in A}$. Thus, by Lemma \ref{lem:Off_Fluctuations}, the contribution of this case is null. So we may replace the first above indicator by $\1_{b_{i+1}\snlr A\cup B_i}$. Next, all variables in the display except $\Theta$ are $C_{b_{i+1}}$ measurable, so we may rewrite it as
\[ \E \left [\E[\Theta|C_{b_{i+1}}]\1_{b_{i+1}\snlr A\cup B_i} (\1_{x\slr b_{i+1}} -\proba(x\slr b_{i+1}|b_{i+1}\snlr A\cup B_i)) \right ]. \]
By the off method, we have $\E[\Theta|C_{b_{i+1}}]=\E_{\off C_{b_{i+1}}}[\Theta]=-\Phi(C_{b_{i+1}})$, whenever $(A\cup \{b\})\cap C_{b_{i+1}}=\emptyset$. We then factor out $\proba(b_{i+1}\snlr A\cup B_i)$ to turn the indicator of $b_{i+1}\snlr A\cup B_i$ into a conditioning. Then using the generic covariance formula $\E[X(Y-\E[Y])]=\Cov(X,Y)$ for any real $L^2$ random variables, together with the definition of $\kappa$, the previous expectation becomes 
\[ -\proba(b_{i+1}\snlr A\cup B_i) \Cov(\Phi(C_{b_{i+1}}),\1_{x\slr b_{i+1}}|b_{i+1}\snlr A\cup B_i )=-\proba(b_{i+1}\snlr A\cup B_i) [\kappa_{b_{i+1},A\cup B_i}\Phi](x). \]
The desired result is finally obtained by going back to $\E[\Theta\eqref{eq:Unfold_Single_Operator}]$.
\end{proof} 
We deal with the extra term in Lemma \ref{lem:OneStep_Induction} using the induction hypothesis, which requires:
\begin{lemma} \label{lem:Phi_NonDecreasing} The map $K\subseteq \Gr \mapsto \Phi(K)$ is nondecreasing. 
\end{lemma}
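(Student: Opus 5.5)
The plan is to write $\Phi(K)$ as the expectation of an explicitly nonnegative quantity under a single coupling, and then check that this quantity grows when $K$ grows. We only need $\Phi(K)$ for sets $K$ that arise as $C_{b_{i+1}}$ with $(A\cup\{b\})\cap K=\emptyset$, but I will try to prove monotonicity for all $K$.

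\textbf{Step 1: an explicit formula for $\Phi$.} Let $\omega$ be the percolation configuration, $\omega'$ an independent copy, and $\eta$ the configuration $\omega$ with all edges adjacent to $K$ closed, so that $\eta$ has law $\proba_{\off K}$. Write $L=C_A(\eta)$. On $\{b\snlr A\}$ in $\eta$, we have $C_b(\eta)=C_b(\omega \text{ off } K\cup L)$. By the off method (exploring $L$ reveals only edges in or adjacent to $L$), the edges of $\omega$ outside that revealed set are fresh. So they may be replaced by the corresponding edges of $\omega'$. Also $\E_{\off L}[\Fc(C_b)]$ equals the $\omega'$-expectation of $\Fc(C_b(\omega' \text{ off } L))$. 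Combining these two facts should give
\[ \Phi(K)=\E\Big[\1_{b\notin L}\Big(\Fc(C_b(\omega' \text{ off } L))-\Fc(C_b(\omega' \text{ off } L\cup K))\Big)\Big]. \]
The integrand is nonnegative since $\Fc$ is nondecreasing, and $\Phi(\emptyset)=0$, which is consistent with Lemma \ref{lem:Off_Fluctuations}.

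\textbf{Step 2: reduce to adding one vertex.} It suffices to compare $K$ and $K'=K\cup\{v\}$, and to couple $\eta$ and $\eta'=\eta$ off $\{v\}$ using the same $\omega,\omega'$. Then $L':=C_A(\eta')\subseteq L$, and $b\notin L$ implies $b\notin L'$. There are two cases.
\begin{itemize}
\item If $L'=L$, the integrand only increases. The first term is unchanged, and $C_b(\omega' \text{ off } L\cup K')\subseteq C_b(\omega' \text{ off } L\cup K)$, so the subtracted term decreases.
\item If $L'\subsetneq L$, then $v$ is adjacent to $L$, and $L\setminus L'$ is the part of $L$ that was connected to $A$ only through edges at $v$.
\end{itemize}

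\textbf{Step 3: the main obstacle, the case $L'\subsetneq L$.} I must show
\[ \Fc(C_b(\omega' \text{ off } L'))-\Fc(C_b(\omega' \text{ off } L'\cup K'))\ \ge\ \Fc(C_b(\omega' \text{ off } L))-\Fc(C_b(\omega' \text{ off } L\cup K)), \]
possibly only after averaging over the edges of $\omega'$ near $L\setminus L'$. The first thing I would try is the following. Every connection from $b$ into $L\setminus L'$ that is blocked in $\omega'\text{ off } L$ but allowed in $\omega'\text{ off } L'$ also enlarges the first cluster on the left. The only exception is when that connection passes through $v$, and in that case it is killed in the second cluster on the left, because $v\in K'$. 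So the gain in the first term should dominate the change in the second term.

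If this pathwise comparison fails, I would instead exchange the roles of $\omega$ and $\omega'$ on the edges of $L\setminus L'$, which have the same law. I would then use the Harris inequality for the nondecreasing function $\Fc$ to conclude in expectation.
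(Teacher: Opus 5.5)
Your Step 1 formula is correct, because conditionally on $L$ the edges of $\omega$ not touching $L$ are fresh. Step 2 is also fine. Step 3, however, is a genuine gap. The inequality you aim for there is false, and not only pathwise: it fails even after integrating out $\omega'$ completely. So no averaging over $\omega'$ or Harris argument can rescue this case split.

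Here is a counterexample. Take the path graph $a-v-w-b-k$ with $A=\{a\}$, $K=\{k\}$, added vertex $v$, and $\Fc(C)=\1_{|C|\geq 2}$. Take $\omega$ with $av$ and $vw$ open and $wb$ closed. Then $L=\{a,v,w\}$, $L'=\{a\}$ and $b\notin L$.
\begin{itemize}
\item The right-hand side of your Step 3 inequality equals $\1_{\omega'(bk)=1}$, so its $\omega'$-expectation is $p_{bk}$.
\item The left-hand side equals $\1_{\omega'(bk)=1,\,\omega'(wb)=0}$, so its $\omega'$-expectation is $p_{bk}(1-p_{wb})<p_{bk}$.
\item Pathwise, with $bk$ and $wb$ open in $\omega'$, the left side is $0$ and the right side is $1$.
\end{itemize}
Your heuristic misses one effect. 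An $\omega'$-open edge from $b$ into $L\setminus L'$ that avoids $v$ also enlarges the \emph{subtracted} cluster $C_b(\omega'\text{ off }L'\cup K')$. For a saturating $\Fc$, this cancels the gain in the first term. The independent copy has discarded the key information that such edges are closed in $\omega$, since $b\notin L$.

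The missing idea is to take $\omega'=\omega$. This is legitimate by the same fact you used in Step 1. The event $\{L=\ell\}$ depends only on edges touching $\ell$, while $C_b(\cdot\text{ off }L)$ and $C_b(\cdot\text{ off }L\cup K)$ only read edges not touching $L$. Moreover, $C_b(\omega\text{ off }L\cup K)=C_b(\omega\text{ off }K)$ on $\{b\notin L\}$. Writing $C_A^K:=C_A(\omega\text{ off }K)$, this gives
\[ \Phi(K)=\E\Big[\1_{b\notin C_A^K}\big(\Fc(C_b(\omega\text{ off }C_A^K))-\Fc(C_b(\omega\text{ off }K))\big)\Big], \]
which is exactly the paper's formula.

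Now fix $\omega$ and enlarge $K$. Then $C_A^K$ shrinks, so the indicator and $C_b(\omega\text{ off }C_A^K)$ increase, while $C_b(\omega\text{ off }K)$ decreases. The bracket is nonnegative on $\{b\notin C_A^K\}$, since there $C_b(\omega\text{ off }K)=C_b(\omega\text{ off }K\cup C_A^K)\subseteq C_b(\omega\text{ off }C_A^K)$. Hence the integrand is pointwise nondecreasing in $K$. This needs no case distinction on $L'$ versus $L$ and no Harris inequality.
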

\begin{proof} First, by Lemma \ref{lem:Off_Fluctuations}, $\Phi(\emptyset)=-\E[\Theta]=0$, so it suffices to check that $\Phi$ is nondecreasing. Write $C_b^K$ for the cluster of $b$ after closing the edges adjacent to $K$. Define $C_A^K$ similarly. Also define $C_b^{A,K}$ as the cluster of $b$ in the initial percolation after closing all edges incident to $C_A^K$. By repeated use of Aizenman's off method, we get
\begin{align} \Phi(K)  & =\E_{\off K}\left  [\1_{b\snlr A}\left ( \E_{\off C_A}[\Fc(C_b)] -\Fc(C_b) \right ) \right ], \notag
\\ & = \E\left  [\1_{b\notin C_A^K}\left ( \E_{\off C^K_A}[\Fc(C_b)] -\Fc(C^K_b) \right ) \right ], \notag \\
& = \E\left  [\1_{b\notin C_A^K}\left (\Fc(C_b^{A,K}) -\Fc(C^K_b) \right ) \right ].\label{eq:Off_Off} 
\end{align} 
Increasing $K$ removes more edges, so it decreases $C_b^K$ and $C_A^K$. Similarly, it then increases $C_b^{A,K}$. Since $\Fc$ is nondecreasing, the difference $ \Fc(C_b^{A,K}) -\Fc(C^K_b)$ is nondecreasing. 

We claim that this difference is also nonnegative whenever $b\notin C_A^K$. Indeed, in that case $b$ is not connected to $A$ by an open path avoiding $K$, so $C_b^K\cap C_A^K=\emptyset$. As a result, for every $b'\in C_b^K$, there is a path from $b$ to $b'$ that avoids $C_A^K$, so $b'\in C_b^{A,K}$. In other words, $C^K_b\subseteq C_b^{A,K}$, and by monotonicity of $\Fc$, we get 
\[ \1_{b\notin C_A^K}(\Fc(C_b^{A,K}) -\Fc(C^K_b))\geq 0.\]
In conclusion, $\Phi(K)$ is nondecreasing since all the terms in \eqref{eq:Off_Off} are non-decreasing.
\end{proof} 
We now have all the elements to prove Theorem \ref{thm:PostOff}. 
\begin{proof} Repeatedly applying Lemma \ref{lem:OneStep_Induction} gives, by induction, for every $0\leq i\leq k$ and $x\in \Gr$, writing $B_i=\{b,b_1,\dots, b_i\}$,
\[ [\pi_{A\cup \{b\},(b_1,b_2,\dots,b_i)} \Sc](x)=\Sc_{i}(x)+\sum_{j=0}^{i-1}\proba(b_{j+1}\snlr A\cup B_j)\Big [\pi_{A\cup B_{j+1},(b_{j+2},\dots,b_i)} [\kappa_{b_{j+1},A\cup B_j}\Phi] \Big ](x) .\]
By Lemma \ref{lem:Phi_NonDecreasing}, $\Phi$ is nondecreasing, so by the induction hypothesis used with $A\cup B_j$, $b_{j+1}$, and the sequence $(b_{j+2},\dots b_i)$, which is shorter than $\Bc$, each of the terms in the above sum is nonnegative. Taking $i=k$ therefore gives
\[ \pi_{A\cup\{b\}, \Bc}\Sc(x)\geq \Sc_k(x). \]
Moreover, by the same argument as in the proof of Lemma \ref{lem:Rewrite_Sc0}, we may rewrite $ \Sc_k(x)$ as 
\[ \Sc_k(x)=\E[\Cov_{\off C_A}(\Fc(C_b);\1_{x\slr B_k});b\snlr A]. \]
By the Harris inequality, the covariance above is a.s. nonnegative, so $\Sc_k(x)\geq 0$. Therefore, $$\pi_{A\cup\{b\}, \Bc}\Sc(x)\geq 0. \qedhere$$ 
\end{proof} 
\subsection{The Gibbs argument}
We now deduce Theorem \ref{thm:Projection_Covariance} from the fact that Theorem \ref{thm:PostOff} holds for every $\Fc$.  We first rewrite $ [\pi_{A\cup \{b\},\Bc}\circ \kappa_{b,A}]\Fc$. To this end, define the scalar product $\langle ,\rangle$ for functions $\Fc,\Gc:\Comp\mapsto \R$ by 
\[ \langle \Fc,\Gc\rangle = \Cov(\Fc(C_b),\Gc(C_b)|b\snlr A).\]
Then, for every $u\in \Gr$, let $\1_u$ denote the function $x\in \Gr\mapsto \1_{x=u}$ and let $\1_{u\in} :C\in \Comp \mapsto \1_{u\in C}$. For $C\subseteq\Gr$, let $\1_{\in C}:u\in \Gr\mapsto \1_{u\in C}$. We may rewrite the operator $\kappa_{b,A}$ as $\kappa_{b,A}\Fc:x\mapsto \langle \Fc,\1_{x\in} \rangle.$
For every $\Gc:\Gr\mapsto \R$, we have $\Gc=\sum_{u\in \Gr} \1_{u} \cdot \Gc(u)$. Using this formula for the functions $\kappa_{b,A}\Fc$ and using the linearity of $\pi_{A\cup \{b\},\Bc}$, we deduce for every $x\in \Gr$ that 
\begin{align} [\pi_{A\cup \{b\},\Bc}\circ \kappa_{b,A}]\Fc(x)
 & =\sum_{u\in \Gr}[\pi_{A\cup \{b\},\Bc}\1_u](x)\cdot  [\kappa_{b,A}\Fc](u) \notag
 \\ & =\sum_{u\in \Gr}[\pi_{A\cup \{b\},\Bc}\1_u](x) \cdot \langle \Fc ;\1_{u\in} \rangle \notag 
 \\ & =\langle \Fc ;\Hc_x \rangle, \label{eq:Rewrite_Scalar_Product}
 \end{align}
writing for every $C\in \Comp$, \[ \Hc_x(C)=\sum_{u\in \Gr}  [\pi_{A\cup \{b\},\Bc}\1_u](x)\1_{u\in C}= [\pi_{A\cup \{b\},\Bc} \1_{\in C}](x),\]
reusing the linearity of $\pi_{A\cup \{b\},\Bc}$ and the identity $\sum_{u\in \Gr} \1_{u}\cdot \Gc(u)=\Gc$ with $\Gc=\1_{\in C}$.

Next, by the law of total covariance, we get 
\begin{align} [\pi_{A\cup \{b\},\Bc}\circ \kappa_{b,A}]\Fc(x)
  = & \E \left [\Cov \left (\Fc(C_b);\Hc_x(C_b) \middle |C_A\right ) \middle |b\snlr A\right ] \notag
\\ &  +\Cov\left (\E [\Fc(C_b)|C_A] , \E [\Hc_x(C_b)|C_A] \middle |b\snlr A \right ).  \label{eq:Split_Gibs}\end{align}
We rewrite and deal with the two terms on the right-hand side separately. By the off method, the first expectation equals 
\[ \E \left [\Cov_{\off C_A} \left (\Fc(C_b);\Hc_x(C_b)\right ) \middle |b\snlr A\right ]. \]
Then, for $K\subseteq \Gr$, consider the scalar product defined by $\langle \Fc,\Gc\rangle_K:=\Cov_{\off K} \left (\Fc(C_b);\Gc(C_b)\right )$ for every $\Fc,\Gc:\Comp\mapsto \R$, and the operator $\kappa_{b,\off K}$ defined by $\kappa_{b,\off K}\Fc(x):=\Cov_{\off K} (\Fc(C_b);\1_{x\slr b} )$. Using the same linear computations as for $[\pi_{A\cup \{b\},\Bc}\circ \kappa_{b,A}]\Fc(x)$, with the new scalar product and $\kappa$ operator, we can rewrite the above as
\[ \E\left [\big [\pi_{A\cup \{b\},\Bc}\circ \kappa_{b,\off C_A} \big ]\Fc(x) \middle |b\snlr A  \right ],  \]
By linearity of $\pi_{A\cup \{b\},\Bc}$ and the definition of $\kappa_{b,\off C_A}$, this expectation equals 
\[ \pi_{A\cup \{b\},\Bc} \left [y\mapsto \E \left [\Cov_{\off C_A}(\Fc(C_b),\1_{y\slr b}) \middle | b\snlr A \right ]  \right ](x).\]
By multiplying the above by $\proba(b\snlr A)$, we recognize exactly the function $\Sc$ of Theorem \ref{thm:PostOff}, so  the last display is nonnegative.

For the second term of \eqref{eq:Split_Gibs}, since taking the conditional expectation is an orthogonal projection,
\begin{align*} \Cov\left (\E [\Fc(C_b)|C_A] , \E [\Hc_x(C_b)|C_A] \middle |b\snlr A \right ) & = \Cov\left (\E [\Fc(C_b)|C_A] , \Hc_x(C_b) \middle |b\snlr A \right )
\\ & = \Cov\left (\E[\E [\Fc(C_b)|C_A]|C_b] , \Hc_x(C_b) \middle |b\snlr A \right )
\\ & = \langle \Tf[\Fc],\Hc_x\rangle, 
\end{align*}
where $\Tf$ is the linear operator defined such that for every $\Fc:\Comp\mapsto \R$ and $C\in \Comp$ we have
\[ \Tf[\Fc](C)=\E_{\off C}[\E_{\off C_A} [\Fc(C_b)]]. \] 
In summary, we get the following result.
\begin{proposition} \label{pro:Gibs_OneStep} For every $\Fc:\Comp\mapsto \R$ nondecreasing, 
 $$\langle \Fc, \Hc_x\rangle \geq  \langle \Tf \Fc, \Hc_x\rangle.$$
\end{proposition}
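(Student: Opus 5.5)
We want to show $\langle \Fc,\Hc_x\rangle \geq \langle \Tf\Fc,\Hc_x\rangle$. Combining \eqref{eq:Rewrite_Scalar_Product} and \eqref{eq:Split_Gibs} with the rewriting of the first term, we already have the identity
\[ \langle \Fc,\Hc_x\rangle = \proba(b\snlr A)^{-1}[\pi_{A\cup\{b\},\Bc}\Sc](x) + \langle \Tf\Fc,\Hc_x\rangle .\]
So the plan is first to check carefully that this identity holds for an arbitrary $\Fc:\Comp\mapsto\R$. This means verifying that the law-of-total-covariance split, the off-method rewriting of both terms, and the identification of the second term with $\langle\Tf\Fc,\Hc_x\rangle$ never used monotonicity.

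Only the first step is delicate. Here one must check that the first term of \eqref{eq:Split_Gibs} equals $\E[\Cov_{\off C_A}(\Fc(C_b);\Hc_x(C_b))\mid b\snlr A]$. The subtlety is that on the event $b\snlr A$, conditioning on $C_A$ leaves $C_b$ distributed as under $\proba_{\off C_A}$, where it is automatically disjoint from $A$. This is exactly the off method, and it needs no assumption on $\Fc$.

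Second, the exchange of $\pi_{A\cup\{b\},\Bc}$ with $\E[\cdot\mid b\snlr A]$ and with $\Cov_{\off C_A}$ also needs checking. It is pure linearity, since $\Hc_x(C)=[\pi_{A\cup\{b\},\Bc}\1_{\in C}](x)$ is a finite linear combination of the values $\1_{u\in C}$ with deterministic coefficients depending only on $x$.

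With the identity in hand, the proposition reduces to $[\pi_{A\cup\{b\},\Bc}\Sc](x)\geq 0$ for nondecreasing $\Fc$. That is precisely Theorem \ref{thm:PostOff}, valid for $x\notin A\cup\Bc\cup\{b\}$, which is the range in which Theorem \ref{thm:Projection_Covariance} is claimed. Since $\proba(b\snlr A)>0$ because all $p_e\in(0,1)$, the factor does not affect the sign.

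The main obstacle is therefore bookkeeping rather than a new idea. We must make sure that $\Tf$ is exactly the operator produced by the double conditioning $\E[\E[\Fc(C_b)\mid C_A]\mid C_b]$ on $\{b\snlr A\}$. Concretely, given $C_b=C$, the cluster $C_A$ is distributed as under $\proba_{\off C}$, and then $\E[\Fc(C_b)\mid C_A]=\E_{\off C_A}[\Fc(C_b)]$. Both facts are applications of the off method on the event that $A$ and $b$ lie in different clusters.

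I would also note that the proposition is meant to be iterated in the Gibbs argument, replacing $\Fc$ by $\Tf\Fc$. That step would require $\Tf$ to preserve monotonicity, but it is not needed for the present statement.
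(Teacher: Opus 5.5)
Your proposal is correct and follows essentially the paper's own argument. The proposition is exactly the summary of \eqref{eq:Rewrite_Scalar_Product}--\eqref{eq:Split_Gibs}: the conditional-covariance term is turned, via the off method and linearity of $\pi_{A\cup\{b\},\Bc}$, into $\proba(b\snlr A)^{-1}[\pi_{A\cup\{b\},\Bc}\Sc](x)\geq 0$ by Theorem \ref{thm:PostOff}, and the second term is identified with $\langle \Tf\Fc,\Hc_x\rangle$ through the double conditioning $\E[\E[\Fc(C_b)\mid C_A]\mid C_b]$ on $\{b\snlr A\}$.
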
 
To use the previous proposition repeatedly, we need the following property.
\begin{lemma} \label{lem:Gibs_Monotony} For every $\Fc:\Comp\mapsto \R$ nondecreasing, the function $\Tf \Fc:\Comp\mapsto \R$ is nondecreasing.
\end{lemma}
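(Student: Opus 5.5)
The plan is to write $\Tf$ as a composition of two maps that each reverse monotonicity, and then to check each reversal with a monotone coupling on a single edge configuration. For $K\subseteq\Gr$ set
\[ g(K):=\E_{\off K}[\Fc(C_b)],\qquad\text{so that}\qquad \Tf[\Fc](C)=\E_{\off C}\big[g(C_A)\big]. \]
Here the outer expectation is taken under $\proba_{\off C}$, and $C_A$ is the cluster of $A$ in that modified percolation. We will show that $g$ is nonincreasing in $K$, and that $C\mapsto C_A$ under $\proba_{\off C}$ is nonincreasing in $C$ in a coupled sense. The composition of these two facts is nondecreasing.

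\textbf{Step 1: $g$ is nonincreasing.} Realise every $\proba_{\off K}$ on one i.i.d. edge configuration $\omega$ with law $\proba$. Let $\omega^K$ be $\omega$ with all edges adjacent to $K$ declared closed, and let $C_b^K$ be the cluster of $b$ in $\omega^K$. If $K\subseteq K'$, then every edge closed in $\omega^K$ is also closed in $\omega^{K'}$. Hence $C_b^{K'}\subseteq C_b^K$, and since $\Fc$ is nondecreasing, $\Fc(C_b^{K'})\le \Fc(C_b^K)$ pointwise in $\omega$. Taking expectations gives $g(K')\le g(K)$. No case distinction is needed when $b\in K$: then $C_b^K=\{b\}$, which is still the smallest possible cluster containing $b$.

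\textbf{Step 2: the outer cluster shrinks.} Use an independent copy $\omega'$ with the same coupling. Let $C_A^{C}$ be the set of vertices connected to $A$ in $\omega'^{C}$, so that $\Tf[\Fc](C)=\E[g(C_A^{C})]$. If $C\subseteq C'$, then $\omega'^{C'}\le\omega'^{C}$ edgewise, so $C_A^{C'}\subseteq C_A^{C}$ pointwise. This also covers the case where $A$ meets $C$: the vertices of $A$ always belong to $C_A$, and only the surrounding connections are lost. By Step 1, $g(C_A^{C'})\ge g(C_A^{C})$ pointwise in $\omega'$. Taking expectations yields $\Tf[\Fc](C')\ge \Tf[\Fc](C)$, which is the claim.

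The only delicate point is conceptual rather than computational. In the definition of $\Tf$, the inner expectation $\E_{\off C_A}$ is taken under the original measure with only the edges adjacent to $C_A$ closed; it is not taken under $\proba_{\off C}$. So enlarging $C$ acts only through the random set $C_A$, which justifies the composed form above. The resulting double antitonicity ("more closed edges outside means a smaller $C_A$, which means fewer closed edges inside and hence a larger $C_b$") is the same mechanism as in the proof of Lemma \ref{lem:Phi_NonDecreasing}, where $C_b^{A,K}$ increased with $K$.
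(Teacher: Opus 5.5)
Your proof is correct and follows the paper's approach. The paper reads $\Tf$ as a two-step resampling: first $C_A$ under $\proba_{\off C}$, then $C_b$ under $\proba_{\off C_A}$. A larger $C$ gives a smaller $C_A$, which in turn gives a larger resampled $C_b$; you make this one-line argument rigorous by writing $\Tf[\Fc](C)=\E_{\off C}[g(C_A)]$ and checking both antitone steps with explicit monotone couplings.
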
 
\begin{proof} Note that $\Tf$ is obtained as the conditional expectation given an initial cluster $C$, after first resampling $C_A$ and then $C_b$. The result follows directly from the fact that the larger $C$ is, the smaller $C_A$ is when sampled, and so the larger $C_b$ becomes when resampled.
\end{proof} 
By Lemma \ref{lem:Gibs_Monotony}, we may repeatedly use Proposition \ref{pro:Gibs_OneStep} to get for every $n\geq 0$, 
\[ \langle \Fc, \Hc_x\rangle \geq \langle \Tf^n \Fc, \Hc_x\rangle.\]
Thus, by \eqref{eq:Rewrite_Scalar_Product}, the proof of Theorem \ref{thm:Projection_Covariance} is complete as long as we prove that as $n\to \infty$,
\[ \langle \Tf^n \Fc, \Hc_x\rangle\to 0.\]
To this end, note that $\Tf^n \Fc$ is the conditional expectation starting from an initial configuration for $C_b$, obtained by resampling $C_A$ and then $C_b$ alternately in this order. (This mechanism is the main reason why we call this subsection the Gibbs step.) So the above convergence follows from the fact that the chain mixes. There are many ways to see this. A simple reason is that the Gibbs chain has a regenerative event that occurs at each step with probability bounded below: $C_b$ can always be $\{b\}$.

 \end{document}